\definecolor{darkblue}{rgb}{0,0,0.6}
\numberwithin{equation}{section}
\def\namedlabel#1#2{\begingroup
	#2%
	\def\@currentlabel{#2}%
	\phantomsection\label{#1}\endgroup
}
\newtheorem{thm}[equation]{Theorem}
\newtheorem{lem}[equation]{Lemma}
\newtheorem{cor}[equation]{Corollary}
\theoremstyle{definition}
\newtheorem{defi}[equation]{Definition}
\newtheorem{rem}[equation]{Remark}
\DeclareMathOperator{\id}{id}
\newcommand{\Z}{\mathbb Z}
\newcommand{\CP}{\mathbb{CP}}
\DeclareMathOperator{\ks}{ks}
\newcommand{\wt}{\widetilde}
\renewcommand{\L}{\mathrm{L}}
\renewcommand{\SS}{\mathbb{S}}
\newcommand{\lto}{\longrightarrow}
\title{Topological 4-manifolds with 4-dimensional fundamental group}
\author{Daniel Kasprowski}
\address{Rheinische Friedrich-Wilhelms-Universit\"{a}t Bonn, Mathematisches Institut,\newline \indent Endenicher Allee 60, 53115 Bonn, Germany}
\email{\href{mailto:kasprowski@uni-bonn.de}{kasprowski@uni-bonn.de}}
\author{Markus Land}
\address{Department of Mathematical Sciences, University of Copenhagen, \newline \indent 2100 Copenhagen, Denmark}
\email{\href{mailto:markus.land@math.ku.dk}{markus.land@math.ku.dk}}
\keywords{4-manifolds, (stable) classification, rigidity}
\def\subjclassname{\textup{2020} Mathematics Subject Classification}
\let\csname subjclassname@1991\endcsname=\subjclassname
\let\csname subjclassname@2000\endcsname=\subjclassname
\subjclass{
	57K40, 
	57N65, 
}
\date{\today}
\begin{document}

\begin{abstract}
	Let $\pi$ be a group satisfying the Farrell--Jones conjecture and assume that $B\pi$ is a 4-dimensional Poincar\'e duality space. We consider topological, closed, connected manifolds with fundamental group $\pi$ whose canonical map to $B\pi$ has degree 1, and show that two such manifolds are $s$-cobordant if and only if their equivariant intersection forms are isometric and they have the same Kirby--Siebenmann invariant. If $\pi$ is good in the sense of Freedman, it follows that two such manifolds are homeomorphic if and only if they are homotopy equivalent and have the same Kirby--Siebenmann invariant. This shows rigidity in many cases that lie between aspherical 4-manifolds, where rigidity is expected by Borel's conjecture, and simply connected manifolds where rigidity is a consequence of Freedman's classification results.
\end{abstract}
\maketitle

\section{Introduction}
The classification of closed 4-manifolds remains one of the most exciting open problems in low-dimensional topology. In the topological category, classification is only known for closed 4-manifolds with trivial~\cite{Freedman82}, cyclic~\cite{Freedman-Quinn, surgeryandduality, hambleton-kreck-93-III} or Baumslag-Solitar~\cite{HKT} fundamental group. Here we give a partial answer in the case where the fundamental group $\pi$ satisfies the Farrell--Jones conjecture and is such that $B\pi$ is a 4-dimensional Poincar\'e duality space. For the definition of a Poincar\'e duality space we refer to \cite[Chapter~1]{Wall67}. The condition on the Farrell--Jones conjecture is, by extensive work of Bartels--Reich--L\"uck and collaborators, by now a rather weak assumption \cite[Theorem 14.1]{LueckFJC}. For example, one can consider $\pi$ being $\Z^4$ or an extension of a surface group by a surface group \cite[14.23]{LueckFJC}.
In this paper, all 4-manifolds are assumed to be topological, closed, and connected unless specified otherwise. For the formulation of our main theorem, we recall that a manifold is \emph{almost spin} if its universal cover is spin. 
\begin{thm}
	\label{thm:main}
	Let $\pi$ be the fundamental group of an aspherical 4-dimensional Poincar\'e duality space with orientation character $w$ and assume that $\pi$ satisfies the Farrell--Jones conjectures.
	Let $M$ and $N$ be $4$-manifolds with fundamental group $\pi$ and orientation character $w$ such that the classifying maps $M\to B\pi$ and $N\to B\pi$ have degree $\pm 1$, i.e.\ that the induced maps $H_4(M;\Z^w)\to H_4(B\pi;\Z^w)$ and $H_4(N;\Z^w)\to H_4(B\pi;\Z^w)$ are isomorphisms. 
	
	Then $M$ and $N$ are $s$-cobordant if and only if their equivariant intersection forms are isometric and they have the same Kirby--Siebenmann invariant. 
	In the case that $M$ and $N$ are almost spin, they are $s$-cobordant if and only if their equivariant intersection forms are isometric.
\end{thm}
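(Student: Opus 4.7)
The plan is to apply surgery theory over the Poincar\'e duality space $B\pi$, using the Farrell--Jones conjecture to control both Whitehead torsion and surgery obstructions. Since $B\pi$ is only a Poincar\'e duality space and not a manifold, I would not use Wall's classical surgery exact sequence directly; instead I would work in the framework of Kreck's modified surgery or, equivalently, Ranicki's algebraic surgery exact sequence. Two facts from Farrell--Jones will be used repeatedly: first, because $\pi$ is torsion-free (a consequence of $B\pi$ being a finite-dimensional PD space) and satisfies the $K$-theoretic FJC, the Whitehead group $\mathrm{Wh}(\pi)$ vanishes, so $s$-cobordism and $h$-cobordism coincide and the various $L$-theory decorations agree; second, the $L$-theoretic assembly map $H_*(B\pi;\L)\to L_*(\Z\pi,w)$ is an isomorphism in all degrees.

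\textbf{Step 1: Constructing a normal bordism.} I would begin by fixing a common normal $1$-type $B\to\BTop$ built from $\pi$, the orientation character $w$, and the parity of $w_2$, and equipping $M$ and $N$ with compatible normal $B$-structures refining their classifying maps to $B\pi$. The degree-$\pm 1$ hypothesis ensures that these structures exist and are essentially unique. The next task is to show that the data of an isometry of equivariant intersection forms together with equality of $\ks$-invariants implies that $[M]=[N]$ in the relevant $B$-bordism set $\Omega_4^{\Top}(B,\xi)$, producing a $5$-dimensional normal bordism $W$ between $M$ and $N$.

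\textbf{Step 2: Killing the surgery obstruction.} After surgery below the middle dimension on $W$, the obstruction to turning $W$ into an $s$-cobordism is an element $\Theta(W)\in L_5(\Z\pi,w)$. The FJC identifies this group with $H_5(B\pi;\L)$; the latter is computable via the Atiyah--Hirzebruch spectral sequence with $E^2_{p,q}=H_p(B\pi;L_q(\Z,w))$, and every class in it is realizable by connected sum with a suitable degree-$1$ normal bordism over $B\pi$ (via the Wall realisation theorem). Adding the negative of such a realisation to $W$ kills $\Theta(W)$, so $W$ can be converted to an $s$-cobordism between $M$ and $N$.

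\textbf{Step 3: The almost spin refinement.} In the almost spin case, the universal cover is topologically spin, so a Rokhlin-type argument (as in Freedman--Quinn) gives $\ks(M)\equiv \sign(M)/8\pmod 2$, and the ordinary signature is read off from the equivariant intersection form by augmenting coefficients along $\Z\pi\to\Z$. Hence an isometry of equivariant intersection forms forces $\ks(M)=\ks(N)$, and the conclusion follows from the first part.

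\textbf{Main obstacle.} I expect the hardest step to be Step~1: showing that the equivariant intersection form together with the Kirby--Siebenmann invariant is a complete invariant of the normal $B$-bordism class of a degree-$1$ map to $B\pi$. This has a classical flavor (Kreck, Hambleton--Kreck--Teichner) but must be checked carefully here because $B\pi$ is $4$-dimensional and only a Poincar\'e complex, so the usual comparison with a reference manifold is not available; one must exploit that the classifying maps have degree~$\pm 1$ and asphericity of $B\pi$ to rule out potential secondary bordism invariants.
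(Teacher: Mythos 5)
Your proposal takes the modified surgery (Kreck) route, which the authors say was their original approach before a referee suggested the shorter argument actually in the paper; so the strategy is in principle viable, but as written it has genuine gaps at each step. The published proof avoids your Step 1 entirely: it invokes Hillman's homotopy classification (\cite[Theorem~11]{hillman}, applicable because the degree $\pm1$ hypothesis makes $\pi_2$ projective) to produce a homotopy equivalence $f\colon N\to M$ directly from the isometry of equivariant intersection forms, and then works with the normal invariant $N(f)\in \L\langle1\rangle_4^w(M)$. Condition \ref{cond:A1} forces $N(f)$ into the kernel of $\L\langle1\rangle_4^w(M)\to\L\langle1\rangle_4^w(B\pi)$, which by the Atiyah--Hirzebruch comparison is the kernel of $H_2(M;\Z/2)\to H_2(B\pi;\Z/2)$ and hence consists of spherical classes; the Cochran--Habegger pinch then realizes such a class (when its $w_2$ vanishes) as the normal invariant of a self-equivalence of $M$, reducing to the case of trivial normal invariant, which is handled by surjectivity of assembly in degree 5 (condition \ref{cond:A2}). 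Your Step 1, by contrast, is exactly the hard point you flag: the $E^2$-terms $H_2(B\pi;\Z/2)$ and $H_3(B\pi;\Z/2)$ of the James spectral sequence for $\Omega_4^{\Top}(B,\xi)$ give rise to secondary bordism invariants that are not visibly determined by the intersection form and $\ks$, and you offer no argument for why they agree. Moreover, in Step 2 the obstruction of a bordism over the normal $1$-type lives in Kreck's monoid $\ell_5(\Z\pi,w)$, not in the group $L_5(\Z\pi,w)$, and what is needed is that it can be made \emph{elementary}; ``adding the negative of a Wall realisation'' is not available verbatim in that setting. (The paper's \cref{thm:KT} performs the analogous step legitimately because, after reducing to a homotopy equivalence with trivial normal invariant, the bordism is an honest degree one normal map to $M\times I$.)

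Step 3 contains an outright error: the Rokhlin congruence $\ks(M)\equiv \sign(M)/8 \pmod 2$ holds for \emph{spin} topological $4$-manifolds, not for almost spin ones. The Enriques surface is almost spin (its universal cover is a K3 surface), smooth, with signature $-8$, so $\ks=0$ while $\sign/8\equiv 1 \pmod 2$. Hence an isometry of equivariant intersection forms does not force equality of $\ks$ by any signature formula; in the paper the almost spin case is instead the \emph{easy} case, because there every spherical class has trivial $w_2$, so \cref{lem3} applies to any homotopy equivalence and no appeal to the Kirby--Siebenmann invariant is needed. It is the totally non-spin case that requires the $\ks$ hypothesis, via \cref{lem4}. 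Finally, you do not address the (easy) ``only if'' direction, and your claim that the normal $B$-structures are ``essentially unique'' in Step 1 also deserves justification, since the choice of normal $1$-smoothing affects the bordism class.
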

Here $\Z^{w}$ refers to the $\pi$-module obtained from the sign representation of $C_2=\{\pm 1\}$ on $\Z$ by restricting along the orientation character $w\colon \pi \to C_2$.

\begin{rem}\label{remark3}
We note that the classifying map $M\to B\pi$ has degree $\pm 1$ if and only if $\pi_2(M)$ is a projective $\Z\pi$-module, \cite[Theorem~6]{hillman}. Hence it follows directly from \cite[Theorem~11]{hillman} that under the assumptions of \cref{thm:main} $M$ and $N$ are homotopy equivalent, and we will use this in the proof.
Moreover we note that $M$ is almost spin if and only if the $\Z$-valued intersection form on $\pi_2(M) \cong H_2(\widetilde{M};\Z)$ is even.
\end{rem}

If we furthermore assume that $\pi$ is good in the sense of Freedman, then the topological $s$-cobordism theorem holds in dimension four and we obtain the following corollary. Note that the class of good groups includes all solvable groups~\cite{FT, KQ}, which in particular includes the fundamental groups of torus bundles over tori.
\begin{cor}\label{cor1}
As in \cref{thm:main}, let $\pi$ be the fundamental group of a 4-dimensional Poincar\'e duality space with orientation character $w$ and assume that $\pi$ satisfies the Farrell--Jones conjecture. Let $M$ and $N$ be $4$-manifolds with fundamental group $\pi$ and orientation character $w$ such that the classifying maps $M\to B\pi$ and $N\to B\pi$ have degree $\pm 1$. Assume in addition that $\pi$ is good in the sense of Freedman. 

Then $M$ and $N$ are homeomorphic if and only if their equivariant intersection forms are isometric and they have the same Kirby--Siebenmann invariant. If $M$ and $N$ are almost spin, then they are homeomorphic if and only if their equivariant intersection forms are isometric.
\end{cor}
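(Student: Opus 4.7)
My plan is to deduce \cref{cor1} directly from \cref{thm:main} via the topological $s$-cobordism theorem in dimension four for good groups, as established by Freedman and Quinn.

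For the \emph{only if} direction, if $M$ and $N$ are homeomorphic then they are trivially $s$-cobordant via the cylinder on a homeomorphism, so \cref{thm:main} forces their equivariant intersection forms to be isometric and their Kirby--Siebenmann invariants to agree.

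For the \emph{if} direction, I would first invoke \cref{thm:main} to produce an $s$-cobordism $W$ from $M$ to $N$. Since $\pi$ is good in Freedman's sense, the topological $s$-cobordism theorem applies to the 5-dimensional $s$-cobordism $W$, yielding a homeomorphism $W \cong M \times [0,1]$ and hence a homeomorphism $M \cong N$. The almost spin clause follows in exactly the same way from the corresponding assertion in \cref{thm:main}.

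I do not anticipate any substantive obstacle: the difficult content is fully contained in \cref{thm:main}, and \cref{cor1} is a one-step consequence via the $s$-cobordism theorem available for good fundamental groups (which is precisely why the goodness hypothesis is added). The only thing to check, which is immediate, is that ``good'' as used in \cite{FT, KQ} is strong enough to run the $s$-cobordism argument, not merely the disc embedding theorem in simply connected pieces.
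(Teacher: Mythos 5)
Your proposal is correct and is exactly the paper's (implicit) argument: the paper states just before \cref{cor1} that goodness of $\pi$ gives the topological $s$-cobordism theorem in dimension four, so the corollary follows from \cref{thm:main} by capping the $s$-cobordism with a product structure, and the converse is immediate.
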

The following corollary is an immediate consequence. The first part was previously obtained in \cite[Theorem 0.11]{KL}. Using this, the second part also follows from \cite[Theorem~1.2]{CH}.
\begin{cor}\label{cor2}
Let $X$ be a topological 4-manifold which is aspherical and whose fundamental group $\pi$ is a good Farrell--Jones group, e.g.\ a solvable group, and let $L$ be a simply connected 4-manifold. Let $M$ be a $4$-manifold with fundamental group $\pi$ and orientation character $w_1(M)=w_1(X)$.
\begin{enumerate}
\item If $M$ is homotopy equivalent to $X \# L$, then it is homeomorphic to $X \# L$ or $X\# \star L$.\footnote{Recall that a star partner $\star L$ of the simply connected 4-manifold $L$ is a closed $4$-manifold which is homotopy equivalent to $L$ and has opposite Kirby--Siebenmann invariant. Such a star partner exists if and only if $L$ is not spin \cite[Section~10.4]{Freedman-Quinn}, and in case $L$ is spin, we simply set $\star L = L$.}
\item If the equivariant intersection form of $M$ is induced from an integral form $\lambda$, then $M$ is homeomorphic to $X\#K$ for a simply connected $4$-manifold $K$ whose intersection form is $\lambda$.

\end{enumerate}
\end{cor}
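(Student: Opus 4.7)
The plan is to derive both parts directly from \cref{cor1}. The key initial observation is that because $X$ is aspherical, its universal cover $\widetilde X$ is a contractible $4$-manifold, hence spin; thus $X$ is automatically almost spin. Moreover $X \simeq B\pi$, so the classifying map of $X$ is a homotopy equivalence, and the collapse $X \# L \to X$ then has degree $\pm 1$. This shows that $X\#L$ (and hence $M$, since it is homotopy equivalent to $X\#L$) satisfies the hypotheses of \cref{cor1}.

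For (1), the equivariant intersection forms of $M$ and $X\#L$ are isometric since they are homotopy equivalent, so by \cref{cor1} the homeomorphism type of $M$ is determined by $\ks(M) \in \Z/2$. If $L$ is spin, then $\widetilde{X\#L}$ is obtained by attaching copies of the spin manifold $L$ to the contractible $\widetilde X$ and is thus spin; hence $X\#L$, and so also $M$, is almost spin, and the almost spin part of \cref{cor1} yields $M \cong X\#L = X \# \star L$ with no Kirby--Siebenmann check. If $L$ is not spin, then $\star L$ exists with $\ks(\star L) = \ks(L)+1$, so $\ks(X\#L)$ and $\ks(X\#\star L)$ exhaust both values in $\Z/2$; \cref{cor1} then delivers $M \cong X\#L$ or $M \cong X\#\star L$ according to which value $\ks(M)$ takes.

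For (2), invoke Freedman's classification to choose a simply connected $4$-manifold $K$ with intersection form $\lambda$. Because $\pi_2(X)=0$, the equivariant intersection form of $X\#K$ is the $\Z[\pi]$-extension of $\lambda$, hence isometric to that of $M$. By \cref{remark3} (Hillman's theorem), $M$ and $X\#K$ are then homotopy equivalent. Applying (1) with $L=K$ yields $M \cong X\#K$ or $M \cong X\#\star K$, both of which are of the form $X\#K'$ for a simply connected $K'$ with intersection form $\lambda$.

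I expect the main point requiring care to be the appeal to Hillman's theorem in part (2): one must verify that isometry of equivariant intersection forms, together with the other hypotheses satisfied by $M$ and $X\#K$, is enough to yield a homotopy equivalence. Modulo this input, the remainder of the argument is routine bookkeeping with Kirby--Siebenmann invariants and Freedman's classification of simply connected $4$-manifolds, substantially streamlined by the fact that an aspherical $X$ is automatically almost spin.
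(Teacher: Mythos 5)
Your argument is correct and follows essentially the same route as the paper: both parts reduce to \cref{cor1} via Freedman's realization of $\lambda$ by a simply connected $K$ and the use of star partners to adjust the Kirby--Siebenmann invariant (the paper merely deduces (1) from (2) rather than the reverse). The only inessential difference is that in part (2) you detour through Hillman's homotopy classification to produce a homotopy equivalence $M \simeq X\#K$ before citing part (1), whereas \cref{cor1} only requires the isometry of the equivariant intersection forms; in either case one should note that the degree-$\pm 1$ hypothesis for $M$ in part (2) holds because $\pi_2(M)\cong \lambda\otimes_\Z\Z\pi$ is a free, hence projective, $\Z\pi$-module.
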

\begin{proof}
We first prove Assertion (2). By Freedman's classification of simply connected 4-manifolds \cite[Theorem~1.5]{Freedman82}, there is a simply connected $4$-manifold $K$ with intersection form $\lambda$, such that $\lambda_M \cong \lambda\otimes_\Z \Z\pi$. We wish to apply \cref{cor1} to $M$ and $X \# K$. By \cref{remark3}, $M$ is almost spin if and only if $X \# K$ is (which is the case if and only if $K$ is spin), and in this case \cref{cor1} applies immediately, since the collapse map $X\# L\to X$ has degree $\pm 1$.
If $M$ is totally non-spin, then $K$ is not spin. Hence there exists a star partner $\star K$ with
opposite Kirby--Siebenmann invariant. Then either $X\# K$ or $X \# \star K$ has the same Kirby--Siebenmann as $M$, so we may apply \cref{cor1}.

The same argument gives Assertion (1), noting that if $M$ is homotopy equivalent to $X\# L$, then its intersection form is induced from the (integral) intersection form of $L$.
\end{proof}

We interpret this as saying that manifolds of the kind $X \# L$ as in the corollary are \emph{weakly rigid}, i.e.\ that any two such manifolds which are homotopy equivalent (equivalently whose intersection forms are isomorphic) are already homeomorphic. A variant of such weak rigidity properties was studied by \cite{KL} and \cite{khan}. 
This builds a bridge between 
the rigidity phenomena envisioned by Borel for aspherical manifolds and the rigidity present in simply connected topological 4-manifolds by Freedman's results.

Besides \cite[Theorem~11]{hillman} the main tool is the surgery sequence. Even though it is not exact in general, it suffices for proving \cref{thm:main}. In a previous version we deduced \cref{thm:main} from Kreck's modified surgery and we thank an anonymous referee for pointing out the current approach which shortens the proof considerably.

\subsection*{Acknowledgements}
We thank Diarmuid Crowley, Mark Powell and anonymous referees for helpful comments on a previous version of this paper.
The first author was funded by the Deutsche Forschungsgemeinschaft (DFG, German Research Foundation) under Germany's Excellence Strategy - GZ 2047/1, Projekt-ID 390685813. The second author was supported by the SFB 1085 ``Higher Invariants'' in Regensburg, by the DFG through a research fellowship and by the Danish National Research Foundation through the Centre for Symmetry and Deformation (DNRF92) and the Copenhagen Centre for Topology and Geometry (DNRF151). We thank the MATRIX centre for hospitality during the workshop ``Topology of Manifolds: Interactions between high and low dimensions''.

\section{L-theoretic preliminaries}
We will denote by $\L^q(R)$ the quadratic (free) L-theory spectrum of a ring and let $\L^q_n(R) := \pi_n(\L^q(R))$ denote the $n$th quadratic L-group of $R$. Following \cite{KLPT15}, we set $\L = \L^q(\Z)$ and let $\L\langle 1 \rangle$ be its connected cover.

Since we work in the possibly non-orientable situation, the surgery obstruction groups are $w$-twisted L-groups, i.e.\ the quadratic L-groups $\L^q_*(\Z G,w)$ of the ring $\Z G$ whose involution is induced by $g \mapsto w(g)\cdot g^{-1}$, for some orientation character $w\colon G \to C_2$. In the geometric situation the group $G$ is the fundamental group of a
manifold $M$ and $w = w_1(M)$ is its orientation character. The surgery obstruction map is, as in the oriented case, equivalent to a map induced by the assembly map in $\L$-theory, but now taking the orientation character $w$ into account, see \cite[Appendix A]{Ranicki-blue-book}. More precisely, 
the surgery obstruction map is isomorphic to the composite
\[ \L\langle 1\rangle_*^w(BG) \lto \L^w_*(BG) \lto \L^q_*(\Z G,w) \]
where $\L\langle1\rangle^w_*(BG)$ and $\L^w_*(BG)$ denote the 
$w$-twisted $\L\langle 1 \rangle$- and $\L$-homology of $BG$, respectively. The first map in the composite is induced by the canonical map $\L\langle1\rangle \to \L$ and the second map is the assembly map. Here, the twist comes from pulling back a particular $C_2$-action on $\L$ along the orientation character $w \colon G \to C_2$, see \cref{remark:C2-actions} for more information on this $C_2$-action. 

\begin{rem}\label{remark:C2-actions}
The $C_2$-action on $\L$ is in fact given by $\SS^{\sigma-1}\otimes \L$, where the $C_2$-action on $\SS^{\sigma-1}$ will be recalled below and the action on $\L$ is trivial\footnote{We follow the convention of writing $\otimes$ for the symmetric monoidal structure of spectra which is often also denoted $\wedge$.}. 
Discarding the $C_2$-action, we have $\SS^{\sigma-1}\otimes \L = \L$ since $\SS^{\sigma-1} = \SS$ once we discard the $C_2$-action. The above identification of the $C_2$-action becomes relevant when one identifies normal invariants of non-orientable manifolds with twisted $\L$-homology, as needed when showing that the Borel conjecture follows from the Farrell--Jones conjectures also for non-orientable manifolds. In addition, it can be used to determine differentials in the equivariant Atiyah--Hirzebruch spectral sequence for twisted $\L$-homology, see \cref{remark:equivariant-k-invariant}. The rest of this remark is intended to give some context on $C_2$-actions on $\L$. It will not be used in this paper and may therefore safely be skipped at first reading.

So let us explain a bit more about $C_2$-actions on $\L$. Denote by $S^\sigma$ the one-point compactification of the sign representation of $C_2$ on $\mathbb{R}$, in other words, the space $S^1$ with $C_2$-action given by complex conjugation. Let $\SS^\sigma$ be its suspension spectrum, and $\SS^{\sigma-1}$ its one-fold desuspension. This is a spectrum with an action of $C_2$, whose underlying spectrum is just the usual sphere spectrum $\SS$ and where the generator of $C_2$ acts by multiplication by $-1$. While in algebra, a $C_2$-action is determined by this property, this is not the case in topology. For instance, denote by $n\sigma$ the one-point compactification of an $n$-fold direct sum of the sign representation $\sigma$, and consider $\SS^{n\sigma-n}$. This is also a spectrum with $C_2$-action whose underlying spectrum is $\SS$ and where the generator acts by $(-1)^n$, and thus by a sign if $n$ is odd. However, as spectra with $C_2$-action, all of these turn out to be pairwise inequivalent. Indeed, there is a functor, the $C_2$-Tate construction, which sends $\SS^{n\sigma-n}$ to $\SS^{-n}$. Unlike in algebra, one therefore has to be more careful in general when talking about a $C_2$-action ``by a sign'' on a spectrum $X$, as for odd $n$, any $\SS^{n\sigma-n}\otimes X$ is a reasonable such object. We remark here that for specific $X$, the family of spectra with $C_2$-action $\SS^{n\sigma-n}\otimes X$ simplifies: if $M$ is an $\mathrm{MU}$-module spectrum\footnote{That is, $M$ is equipped with a homotopy unital and homotopy associative multiplication map $\mathrm{MU} \otimes M \to M$, where $\mathrm{MU}$ denotes the complex cobordism spectrum.}, it turns out that for any complex representation $V$ of $C_2$, there is an equivalence $S^V \otimes M \simeq S^{|V|}\otimes M$ of spectra with $C_2$-action, where the latter is given the trivial action. Here, $|V|$ denotes the real dimension of $V$, and the equivalence is a form of a Thom isomorphism for the complex representation $V$. Since $2\sigma$ is a complex representation, we therefore find that $\SS^{2n\sigma-2n} \otimes M$ is equivalent to $M$ with the trivial action, and that $\SS^{(2n+1)\sigma - 2n-1} \otimes M$ is equivalent to $\SS^{\sigma-1}\otimes M$. The spectrum $\L$ is a module spectrum over $\L^s(\Z)$ which in turn comes with ring maps $\mathrm{MU}\to \mathrm{MSO} \to \L^s(\Z)$. Therefore, $\L$ is also a module spectrum over $\mathrm{MU}$ and thus there is a well-defined ``sign-action'' of $C_2$ on $\L$.
\end{rem}

Following \cite{HKT}, we consider the following conditions on a group $\pi$ equipped with an orientation character $w$:
\begin{description}
	\item[\namedlabel{cond:W}{(W)}] The Whitehead group $\mathrm{Wh}(\pi)$ vanishes.
	\item[\namedlabel{cond:A1}{(A1)}] The assembly map $\L\langle 1 \rangle_4^w(B\pi) \to \L_4^q(\Z\pi,w)$ is injective.
	\item[\namedlabel{cond:A2}{(A2)}]\label{cond:A2} The assembly map $\L\langle 1 \rangle_5^w(B\pi) \to \L_5^q(\Z\pi,w)$ is surjective.
\end{description}
\begin{defi}[{\cite{HKT}}]
	A group is said to satisfy property 
	\begin{description}
		\item[\namedlabel{cond:WAA}{(W-AA)}] if it satisfies the three conditions above. 
	\end{description}
\end{defi}

\begin{lem}\label{lemma-WAA}
	Let $\pi$ be a group satisfying the Farrell--Jones conjectures and admitting a CW-model of $B\pi$ of dimension at most $4$. Then $\pi$ has property \ref{cond:WAA}.
\end{lem}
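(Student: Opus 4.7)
The plan is to deduce all three conditions directly from the Farrell--Jones conjectures. For \ref{cond:W}, note that the existence of a finite-dimensional CW-model for $B\pi$ implies that $\pi$ has finite cohomological dimension, and is therefore torsion-free, since any nontrivial finite cyclic subgroup would have infinite cohomological dimension over $\Z$. The $K$-theoretic half of the Farrell--Jones conjectures then shows that the assembly map in algebraic $K$-theory is an isomorphism, and for torsion-free $\pi$ this forces the Whitehead group $\mathrm{Wh}(\pi)$ to vanish.

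For \ref{cond:A1} and \ref{cond:A2}, recall from the preceding discussion that the surgery obstruction map factors as
\[
\L\langle 1\rangle^w_n(B\pi) \to \L^w_n(B\pi) \to \L^q_n(\Z\pi,w),
\]
and that the $L$-theoretic Farrell--Jones conjecture, in the $w$-twisted form indicated in \cref{remark:C2-actions}, asserts that the second map is an isomorphism for every $n$. It therefore suffices to show that the first map is injective for $n = 4$ and surjective for $n = 5$. For this I would use the cofiber sequence of $C_2$-spectra $\L\langle 1 \rangle \to \L \to \tau_{\leq 0}\L$, which after smashing with $(B\pi)_+$ and passing to $w$-twisted homology produces a long exact sequence
\[
\cdots \to (\tau_{\leq 0}\L)^w_{n+1}(B\pi) \to \L\langle 1\rangle^w_n(B\pi) \to \L^w_n(B\pi) \to (\tau_{\leq 0}\L)^w_n(B\pi) \to \cdots
\]
Since $\tau_{\leq 0}\L$ has homotopy concentrated in non-positive degrees and $B\pi$ is at most $4$-dimensional, the Atiyah--Hirzebruch spectral sequence for the twisted homology of $\tau_{\leq 0}\L$ shows that $(\tau_{\leq 0}\L)^w_n(B\pi) = 0$ for all $n \geq 5$. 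Inserting this into the long exact sequence gives the required injectivity in degree $4$ and in fact an isomorphism in degree $5$.

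The only point requiring some care, rather than a genuine obstacle, is the bookkeeping with the $C_2$-twists: one must invoke the $w$-twisted form of the Farrell--Jones conjecture and verify that the cofiber sequence $\L\langle 1\rangle \to \L \to \tau_{\leq 0}\L$ lives in $C_2$-spectra for the sign-action on $\L$, so that the associated long exact sequence in $w$-twisted homology is available. Both of these are covered by the framework described in \cref{remark:C2-actions}.
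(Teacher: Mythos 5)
Your argument follows the same route as the paper's: torsion-freeness of $\pi$ from the finite-dimensional model, the $K$-theoretic Farrell--Jones conjecture for \ref{cond:W}, and then the long exact sequence of the cofibre sequence $\L\langle 1\rangle \to \L \to \tau_{\leq 0}\L$ combined with the vanishing of $(\tau_{\leq 0}\L)^w_n(B\pi)$ for $n\geq 5$, which you correctly extract from the twisted Atiyah--Hirzebruch spectral sequence and $\dim B\pi \leq 4$. That part is fine, and your observation that one even gets an isomorphism in degree $5$ matches the paper.

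The one step you elide is the decoration bookkeeping in $L$-theory. The $L$-theoretic Farrell--Jones conjecture is a statement about the $\langle -\infty\rangle$-decorated $L$-groups, whereas \ref{cond:A1} and \ref{cond:A2} concern the free quadratic $L$-groups $\L^q_*(\Z\pi,w)$; the conjecture does not literally assert that the assembly map $\L^w_n(B\pi)\to \L^q_n(\Z\pi,w)$ is an isomorphism. The paper bridges this gap with the Rothenberg sequences, whose relative terms are Tate cohomology of the groups $\wt K_n(\Z\pi)$ for $n\leq 0$ and of $\mathrm{Wh}(\pi)$; these all vanish by the $K$-theoretic Farrell--Jones conjecture for the torsion-free group $\pi$ (so the $K$-theoretic input is used for more than just \ref{cond:W}). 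This is standard and easily repaired, but as written your appeal to the ``$w$-twisted form'' of the conjecture skips a step that the paper makes explicit.
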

\begin{proof}
	First note that the existence of a finite-dimensional model for $B\pi$ implies that $\pi$ is torsion-free. In this case, the $K$-theoretic Farrell--Jones conjecture implies property \ref{cond:W} as well as the vanishing of $\wt K_{n}(\Z\pi)$ for $n\leq 0$, see e.g.\ \cite[7.2.2]{Lueck-HH}. Indeed, the $K$-theoretic Farrell--Jones conjecture in the case of a torsion free group asserts that the assembly map
\[ K(\Z)_n(B\pi) \lto K_n(\Z\pi) \]
is an isomorphism for all $n\in \Z$. Moreover, 
in non-positive degrees the cokernels of these maps are the groups $\wt{K}_n(\Z\pi)$, and the cokernel in degree 1 is $\mathrm{Wh}(\pi)$.
Using the long exact Rothenberg sequences \cite[\S 16]{Ranicki-lower}, the $\L$-theoretic Farrell--Jones conjecture implies that the assembly map
	\[\L_n^w(B\pi)\lto \L_n^q(\Z\pi,w)\]
	is an isomorphism for all $n\in\Z$. Now, consider the following long exact sequence 
	\[ \dots \to (\tau_{\leq 0}\L)_{n+1}^w(B\pi) \to \L\langle 1 \rangle_n^w(B\pi) \to \L_n^w(B\pi) \to (\tau_{\leq 0}\L)_n^w(B\pi) \to \dots\]
	where $\tau_{\leq 0}\L$ denotes the cofibre of the canonical map $\L\langle 1 \rangle \to \L$. We note that the $n$th homotopy group $(\tau_{\leq 0}\L)_n$ vanishes for $n >0$. As in the untwisted case, this long exact sequence is induced by a fibre sequence of spectra parametrised over $B\pi$
	\[ \L\langle 1\rangle^w \lto \L^w \lto \tau_{\leq 0} \L^w\]
	where the parametrisation is via the orientation character $w$.
	Hence for showing properties \ref{cond:A1} and \ref{cond:A2} it suffices to show that $(\tau_{\leq 0}\L)_{5}^w(B\pi) = 0$. This follows from the twisted Atiyah--Hirzebruch spectral sequence calculating the $w$-twisted $\tau_{\leq 0}\L$-homology of $B\pi$. Indeed, its $E^2$-page is given by $E^2_{p,q} = H_p(B\pi;(\tau_{\leq 0}\L)^w_q)$, which vanishes for $p+q=5$ as $\pi$ has a 4-dimensional model for $B\pi$ by assumption. 
\end{proof}

Property \ref{cond:WAA} is what we will actually use in the proof of \cref{thm:main}.
\begin{rem}
	\label{rem:condition}
	We also note the well-known fact that condition \ref{cond:W} implies that the comparison map from simple L-groups of $\Z\pi$ to the (free) L-groups of $\Z\pi$ are isomorphisms, as the canonical comparison map sits inside the long exact Rothenberg sequence whose third term is $\widehat{H}^*(C_2;\mathrm{Wh}(\pi))$ \cite{Shaneson}; see also \cite[\S 1.10]{Ranickiyellow}. Likewise, under condition \ref{cond:W}, the simple structure set $\mathcal{S}^s(M)$ agrees, via the canonical map, with the non-simple version $\mathcal{S}(M)$. We may therefore always work with the non-simple versions in what follows.
\end{rem}

\section{Proof of \cref{thm:main}}

We will need the following theorem, which follows immediately from \cite[Theorem~4 and its addendum]{kirby-taylor}. For convenience of the reader we briefly explain the proof. We follow the idea from the proof of \cite[Theorem~2.6]{HKT}. 
\begin{thm}
	\label{thm:KT}
	Assume $\pi$ satisfies property \ref{cond:WAA}. Let $M$ and $N$ be as in \cref{thm:main}. Let $f\colon N\to M$ be a homotopy equivalence with trivial normal invariant. Then $N$ and $M$ are $s$-cobordant.
\end{thm}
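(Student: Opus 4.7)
The plan is to run the standard surgery-theoretic procedure: starting from a normal bordism witnessing the triviality of the normal invariant of $f$, modify it relative to its boundary so as to kill its surgery obstruction using \ref{cond:A2}, and then perform $5$-dimensional interior surgery to upgrade the result to the desired $s$-cobordism.

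First, the triviality of the normal invariant of $f$ gives a degree one normal bordism $(W,F)$, where $W$ is a compact $5$-manifold with $\partial W = N\sqcup M$ and $F\colon W\to M\times[0,1]$ restricts to $f$ on $N$ and to $\id_M$ on $M$. Using \ref{cond:W} and the Rothenberg sequence (cf.\ \cref{rem:condition}), the surgery obstruction of $(W,F)$ may be regarded as an element $\sigma(W,F)\in\L_5^q(\Z\pi,w)$.

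The central step is to modify $(W,F)$ relative to $\partial W$ so that $\sigma(W,F) = 0$. Any two such normal bordisms from $f$ to $\id_M$ differ by an interior connected sum with a closed degree one normal map over $M\times[0,1]$, and this alters the surgery obstruction by an element in the image of the composite
\[ \L\langle 1\rangle_5^w(B\pi) \lto \L_5^w(B\pi) \lto \L_5^q(\Z\pi,w), \]
where one uses that the classifying map $M\to B\pi$ has degree $\pm 1$ so that normal invariants over $M$ assemble through $B\pi$. By \ref{cond:A2}, this composite is surjective, and after such a modification we may arrange $\sigma(W,F) = 0$.

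With vanishing surgery obstruction, standard interior surgery on $W$ --- which only requires embeddings of spheres of dimension at most $2$ in the $5$-manifold $W$ and hence lies in codimension at least three, where topological surgery is unobstructed --- produces a normal bordism which is a simple homotopy equivalence rel boundary. The inclusions $M, N\hookrightarrow W$ are then simple homotopy equivalences, so $W$ is an $s$-cobordism from $M$ to $N$, again invoking \ref{cond:W} to pass between simple and non-simple decorations. The main technical hurdle I anticipate is the middle step: correctly identifying the indeterminacy in $\sigma(W,F)$ as the normal bordism varies rel boundary with the image of the assembly composite above, and keeping track of the $w$-twist throughout, so that \ref{cond:A2} is precisely the hypothesis needed.
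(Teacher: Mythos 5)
Your overall strategy is the same as the paper's: take a normal bordism $W$ from $f$ to $\id_M$, change it rel boundary by acting with elements of the relative normal invariant set $\mathcal{N}(M\times I, M\times\{0,1\})$ to kill the surgery obstruction in $\L_5^q(\Z\pi,w)$, and then do interior surgery on the $5$-manifold $W$ (codimension $\geq 3$, hence unobstructed) to produce an $s$-cobordism. Steps one and three are fine.

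There is, however, a genuine gap at the middle step, exactly at the point you flag as the anticipated hurdle. The indeterminacy of $\sigma(W,F)$ is the image of
$\mathcal{N}(M\times I, M\times\{0,1\}) \cong \L\langle 1\rangle_5^w(M) \to \L_5^q(\Z\pi,w)$,
and this map factors through $\L\langle 1\rangle_5^w(B\pi)$ simply by functoriality of the assembly map; that factorization only shows the indeterminacy is \emph{contained in} the image of the composite $\L\langle 1\rangle_5^w(B\pi)\to \L_5^q(\Z\pi,w)$, which is the wrong direction of containment for your argument. To conclude via \ref{cond:A2} that every element of $\L_5^q(\Z\pi,w)$ can be realized, you must additionally show that $\L\langle 1\rangle_5^w(M)\to \L\langle 1\rangle_5^w(B\pi)$ is surjective. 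This is where the degree $\pm 1$ hypothesis actually enters, and it is not automatic: it is proved in the paper by comparing the twisted Atiyah--Hirzebruch spectral sequences for $M$ and $B\pi$ and applying the five lemma to the resulting exact sequences, using that $c\colon M\to B\pi$ is a $\pi_1$-isomorphism of degree $\pm 1$ so that $H_4(-;\Z/2)$, $H_3(-;\Z/2)$, $H_1(-;\Z^w)$ and $H_0(-;\Z^w)$ are all carried isomorphically (in fact the map on $\L\langle1\rangle_5^w$ is an isomorphism). Supplying this argument closes the gap; without it, \ref{cond:A2} alone does not let you arrange $\sigma(W,F)=0$.
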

\begin{proof}
	The vanishing of the Whitehead group implies that every homotopy equivalence is simple and, as explained in \cref{rem:condition}, that we do not have to distinguish between simple L-theory and free L-theory. By assumption, $f$ and $\id_M$ are normally bordant, so applying surgery in the interior of the normal bordism we obtain a normal bordism $W$ between $f$ and $\id_M$ with a 2-equivalence to $M$ and surgery obstruction in $\L_5^q(\Z\pi,w)$. Let $\mathcal{N}(M\times I, M\times\{0,1\})$ be the set of degree one normal maps to $M\times I$ that are the identity on both boundary components. Given an element $V$ from $\mathcal{N}(M\times I, M\times\{0,1\})$, we can glue it along a boundary component of the bordism $W$. 
The surgery obstruction is additive under stacking cobordisms since the surgery kernel of the stacked cobordism is the orthogonal sum of the surgery kernels of the individual cobordisms. Therefore, glueing $V$ to $W$ changes the surgery obstruction by the image of $V$ in $\L_5^q(\Z\pi,w)$. Hence if the map $\mathcal{N}(M\times I, M\times\{0,1\})\to \L_5^q(\Z\pi,w)$ is surjective, the surgery obstruction can be assumed to be zero, and so there is an $s$-cobordism between $M$ and $N$.
	
	It remains to show that $\mathcal{N}(M\times I, M\times\{0,1\})\to \L_5^q(\Z\pi,w)$ is surjective. Under the identification of $\mathcal{N}(M\times I, M\times\{0,1\})$ with $\L\langle1\rangle_5^w(M)$, the map to $\L_5^q(\Z\pi,w)$ agrees with the assembly map $\L\langle1\rangle_5^w(M)\to \L_5^q(\Z\pi,w)$ which factors through $\L\langle 1\rangle_5^w(B\pi)$. The map $\L\langle 1\rangle_5^w(B\pi)\to \L_5^q(\Z\pi,w)$ is surjective by property \ref{cond:A2}.
	Hence it remains to show that $\L\langle1\rangle_5^w(M)\to \L\langle1\rangle_5^w(B\pi)$ is surjective. Indeed, it is an isomorphism. To see this, we consider the twisted Atiyah--Hirzebruch spectral sequences for $M$ and $B\pi$ and obtain the following commutative diagram:
\[\begin{tikzcd}[column sep=tiny]
	H_4(M;\Z/2) \ar[r] \ar[d,"\cong"] & H_1(M;\Z^{w}) \ar[r] \ar[d,"\cong"] & \L\langle1 \rangle_5^w(M) \ar[r] \ar[d] & H_3(M;\Z/2) \ar[r] \ar[d,"\cong"] & H_0(M;\Z^{w}) \ar[d,"\cong"] \\
	H_4(B\pi;\Z/2) \ar[r] & H_1(B\pi;\Z^{w}) \ar[r] & \L\langle 1\rangle_5^w(B\pi) \ar[r] & H_3(B\pi;\Z/2) \ar[r] & H_0(B\pi;\Z^{w})
\end{tikzcd}\]
Since $M \to B\pi$ is degree $1$ and a $\pi_1$-isomorphisms, we find that all vertical maps except for the middle one are isomorphisms. It follows that also the middle one is an isomorphism.
\end{proof}

\begin{rem}\label{remark:equivariant-k-invariant}
In \cite{kirby-taylor} it is observed that $\L\langle 1\rangle^w_5(X) \cong H_1(X;\Z^{w})\oplus H_3(X;\Z/2)$, but we shall not make use of this. One way to see this is to use that the $C_2$-action on $\L$ used for the twisted homology is given by $\SS^{\sigma-1} \otimes \L$ as described in \cref{remark:C2-actions}. Then it follows that $\tau_{[1,4]}\L$ is, as a $C_2$-object, also given by $\SS^{\sigma-1} \otimes \tau_{[1,4]} \L$. From the fact that $\tau_{[1,4]}\L$ unequivariantly splits into the product $\Sigma^2H\Z/2 \oplus \Sigma^4H\Z$, one therefore finds that there is an equivariant splitting of $\tau_{[1,4]}\L$ into $\Sigma^2H\Z/2 \oplus \Sigma^4 H\Z^{-}$, where $\Z^{-}$ denotes the sign representation of $C_2$ acting on $\Z$.
\end{rem}

We need two more lemmas to prove \cref{thm:main}.

\begin{lem}\label{lem3}
	If $f\colon N \to M$ is a homotopy equivalence such that $N(f)$ is represented by an immersed sphere with trivial $w_2$, then there exists a homotopy equivalence $f'\colon N\to M$ with trivial normal invariant.
\end{lem}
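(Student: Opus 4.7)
The plan is to construct a self-homotopy equivalence $h\colon M\to M$ with $N(h)=N(f)$, and then set $f':=h^{-1}\circ f\colon N\to M$, which will have trivial normal invariant by the composition formula for normal invariants.

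First, I would recall the structure of $\mathcal{N}(M)\cong[M,G/\Top]\cong\L\langle 1\rangle_4^w(M)$. The Postnikov tower of $\L\langle 1\rangle$, whose only non-vanishing homotopy groups in degrees $\le 4$ are $\pi_2=\Z/2$ (Arf) and $\pi_4=\Z$ (signature), produces a short exact sequence
\[
 H_3(M;\Z/2)\xrightarrow{\partial} H_0(M;\Z^w)\to \mathcal{N}(M)\to H_2(M;\Z/2)\to 0,
\]
so $\mathcal{N}(M)$ is an extension of a subgroup of $H_2(M;\Z/2)$ (the \emph{Arf summand}) by a quotient of $H_0(M;\Z^w)\cong\Z$. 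Classes in the Arf summand are represented geometrically precisely by immersed $2$-spheres with trivial normal bundle (equivalently, $w_2(\nu_S)=0$).

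Second, the key construction: given an immersed sphere $S\hookrightarrow M$ with trivialized normal bundle, its tubular neighborhood is $S^2\times D^2$. The generator of $\pi_2(G/\Top)\cong\Z/2$ is, by definition, a relative normal invariant of $D^2$ supported away from $\partial D^2$; pulling it back to $S^2\times D^2$ via projection to $D^2$ produces a normal self-map of $S^2\times D^2$ that is the identity on $S^2\times\partial D^2$, and thus extends by the identity on the complement to a normal self-map of $M$. Its underlying map $h_S\colon M\to M$ is a self-homotopy equivalence whose normal invariant is, by construction, the Arf class $[S]\in H_2(M;\Z/2)$. Applied to the sphere representing $N(f)$, the hypothesis gives $N(h)=N(f)$, so that $f':=h^{-1}\circ f$ has $N(f')=0$ by the (signed) additivity of normal invariants under composition of homotopy equivalences.

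The main obstacle is matching $N(h)$ with $N(f)$ not merely in the Arf quotient but throughout $\mathcal{N}(M)$: the $H_0(M;\Z^w)$-part in the extension above could in principle contribute. I would address this by reading the hypothesis ``$N(f)$ is represented by an immersed sphere with trivial $w_2$'' sharply, as asserting that the full class $N(f)\in\mathcal{N}(M)$ arises via the geometric construction described above rather than merely that its image in $H_2(M;\Z/2)$ is a sphere class. Any residual contribution in the lower filtration can be absorbed by a further composition with a self-homotopy equivalence whose normal invariant lies in the $H_0(M;\Z^w)$-piece, constructed by standard stable normal bundle adjustments.
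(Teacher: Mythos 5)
Your overall strategy---produce a self-homotopy equivalence $\varphi$ of $M$ with $N(\varphi)=N(f)$ and set $f'=\varphi^{-1}\circ f$---is exactly the paper's. The gap is in your ``key construction''. Pulling back the generator of $\pi_2(\mathrm{G}/\mathrm{Top})$ over a neighbourhood of the sphere and extending by the constant map produces an element of $\mathcal{N}(M)=[M,\mathrm{G}/\mathrm{Top}]$ with the desired image in $H_2(M;\Z/2)$, i.e.\ a bordism class of degree one \emph{normal maps} over $M$. Nothing in this construction makes the underlying map a homotopy equivalence, let alone a self-map of $M$: in general such a class is realized by a normal map $M'\to M$ from a different manifold, and converting it into a homotopy equivalence would require doing surgery (and would still not land back on $M$). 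The assertion that ``its underlying map $h_S$ is a self-homotopy equivalence by construction'' is precisely the non-trivial content of the lemma and cannot be read off from the construction. (Two smaller inaccuracies: a genuinely immersed sphere has no tubular neighbourhood $S^2\times D^2$, and ``trivial $w_2$'' means $w_2$ vanishes on the class, not that the normal bundle is trivialized.) The paper instead takes $\varphi$ to be the Novikov pinch
\[ M \stackrel{\mathrm{pinch}}{\lto} M\vee S^4 \stackrel{\id\vee \eta^2}{\lto} M\vee S^2 \stackrel{\id+ \iota}{\lto} M,\]
which is manifestly a self-homotopy equivalence (it differs from the identity by a correction factoring through a $4$-connected map); the actual work, supplied by Cochran--Habegger (see also Kirby--Taylor), is the computation that its normal invariant equals $\iota$ precisely when $\iota$ is spherical with $w_2(\iota)=0$. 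That is where both hypotheses enter---not in trivializing a normal bundle.

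Your concern about the $H_0(M;\Z^w)$-summand of $\mathcal{N}(M)$ is legitimate, and your decision to read the hypothesis sharply (the full class $N(f)$ lies in the spherical part of $H_2(M;\Z/2)$) is the intended one; in the application the degree-zero component of $N(f)$ is automatically zero because $N(f)$ lies in the kernel of $\L\langle1\rangle^w_4(M)\to\L\langle1\rangle^w_4(B\pi)$ and $H_0(M;\Z^w)\to H_0(B\pi;\Z^w)$ is an isomorphism. However, your fallback---absorbing a residual $H_0$-component by composing with a self-equivalence whose normal invariant lies in the degree-zero piece, ``constructed by standard stable normal bundle adjustments''---is not available: such classes are constrained by the vanishing of the surgery obstruction of a homotopy equivalence and are not realized by self-equivalences in general. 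That part of the argument should simply be deleted rather than repaired.
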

\begin{proof}
	By \cite{cochran-habegger}, more precisely by the proof of \cite[Theorem 5.1]{cochran-habegger}, for every element $\iota$ of $H_2(M;\Z/2)$ which is represented by an immersed $S^2$ with $w_2(\iota)=0$, there exists a self homotopy equivalence $\varphi$ of $M$ with $N(\varphi) = \iota$; see also \cite[Theorem 18]{kirby-taylor}. Explicitly, this homotopy equivalence is given by the Novikov pinch
\[ M \stackrel{\mathrm{pinch}}{\lto} M\vee S^4 \stackrel{\mathrm{id}\vee \eta^2}{\lto} M\vee S^2 \stackrel{\mathrm{id}+ \iota}{\lto} M.\]
In particular, there exists a self homotopy equivalence $\varphi$ of $M$ with $N(\varphi) = N(f)$ so that $\varphi$ is normally bordant to $f$. Hence, $\varphi^{-1} \circ f$ is a homotopy equivalence from $N$ to $M$ which is normally bordant to $\id_M$ as needed. \end{proof}

\begin{lem}\label{lem4}
Let $M$ be totally non-spin and $M'$ a manifold homotopy equivalent but not $s$-cobordant to $M$. Then $\ks(M) \neq \ks(M')$.
\end{lem}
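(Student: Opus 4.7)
The plan is to prove the contrapositive: assuming $f\colon M' \to M$ is a homotopy equivalence with $\ks(M) = \ks(M')$, I would show that $M$ and $M'$ are $s$-cobordant. By \cref{lem3}, it suffices to verify that the normal invariant $N(f) \in \mathcal N(M)$ is represented by an immersed $2$-sphere with trivial $w_2$.

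Under the identification $\mathcal N(M) \cong \L\langle 1\rangle^w_4(M)$, the $C_2$-equivariant splitting of $\tau_{[1,4]}\L$ described in \cref{remark:equivariant-k-invariant} (applied now in degree $4$ rather than $5$) yields a direct-sum decomposition $\L\langle 1\rangle^w_4(M) \cong H_2(M;\Z/2) \oplus H_0(M;\Z^w)$. Writing $N(f) = (\iota, \epsilon)$, I would first argue that the $H_0$-component $\epsilon$ vanishes for a homotopy equivalence: in the orientable case this is the index/signature formula, and in the non-orientable case a similar twisted computation applies. It then remains to show $w_2(\iota) := \langle w_2(M), \iota\rangle = 0$, which reduces the result to a direct application of \cref{lem3}.

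For this, I would invoke the classical identity, for a homotopy equivalence $f\colon M' \to M$ of topological $4$-manifolds,
\[
\ks(M') - \ks(M) \;=\; \langle w_2(M) \cup \kappa_2(f), [M]_2\rangle \;=\; w_2(\iota),
\]
where $\kappa_2(f) \in H^2(M;\Z/2)$ is the Poincar\'e dual of $\iota$ and $[M]_2$ is the $\bmod 2$ fundamental class. This identity can be checked directly in the simply-connected case (for instance against the star partners appearing in \cref{cor2}) and extended by naturality via the classifying map $M \to B\pi$. The hypothesis $\ks(M) = \ks(M')$ then forces $w_2(\iota) = 0$, so $\iota$ is representable by an immersed $S^2$ with trivial $w_2$ and \cref{lem3} concludes. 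I expect the main technical obstacle to be justifying the above Kirby--Siebenmann identity in the required non-orientable and non-simply-connected generality, and ensuring the vanishing of $\epsilon$ in the non-orientable case; the totally non-spin hypothesis is what makes the statement non-vacuous, since for almost spin $M$ the relation $w_2(\iota) = 0$ holds automatically and the corresponding rigidity is already part of the almost spin half of \cref{thm:main}.
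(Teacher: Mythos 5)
Your overall strategy---reduce to showing $\langle w_2(M), N(f)\rangle = 0$ and then invoke \cref{lem3}---is the right one, and you have correctly identified $\langle w_2(M)\cup\kappa_2(f),[M]_2\rangle$ as the quantity that must be compared with $\ks(M')-\ks(M)$. But the proof has a genuine gap exactly where you flag it: that identity \emph{is} the content of the lemma, and your proposed justification does not close it. ``Extending by naturality via the classifying map $M\to B\pi$'' cannot work, because the Kirby--Siebenmann invariants of $M$ and $M'$ are not pulled back from, or otherwise controlled by, $B\pi$; the identity is a statement about degree-one normal maps, not about $B\pi$. In the non-orientable case one must moreover decide between $\kappa_2\cup w_2$ and $\kappa_2\cup\kappa_2=\kappa_2\cup v_2$ (these differ by $\kappa_2\cup w_1^2$), so even the precise form of the ``classical identity'' requires an argument. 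Your treatment of the $H_0$-component is also shaky as stated: for non-orientable $M$ one has $H_0(M;\Z^w)\cong\Z/2$ and there is no signature to appeal to. The clean argument is the one from the proof of \cref{thm:main}: condition \ref{cond:A1} together with the Atiyah--Hirzebruch comparison places $N(f)$ in the kernel of $\L\langle 1\rangle_4^w(M)\to\L\langle 1\rangle_4^w(B\pi)$, which is identified with the kernel of $H_2(M;\Z/2)\to H_2(B\pi;\Z/2)$; this kills the $H_0$-component for free.

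The paper closes the gap without ever proving the general formula, by bootstrapping from the one instance where it is known, namely $g\colon\star\CP^2\to\CP^2$ with $N(g)=[\CP^1]$. If $N(f)$ is represented by an immersed sphere $\alpha$ with $\langle w_2,\alpha\rangle\neq 0$, form $f\# g\colon M'\#\star\CP^2\to M\#\CP^2$; its normal invariant is represented by $\alpha+[\CP^1]$, which has trivial $w_2$, so \cref{lem3} and \cref{thm:KT} (applied to $M\#\CP^2$, which still satisfies the standing hypotheses) produce an $s$-cobordism from $M'\#\star\CP^2$ to $M\#\CP^2$. Additivity and $s$-cobordism invariance of $\ks$ then give $\ks(M')+1=\ks(M)$. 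You could salvage your route by actually proving your identity via a normal-cobordism or characteristic-class argument (it is essentially Kirby--Taylor's formula for the Kirby--Siebenmann invariant of a degree-one normal map), but as written the crucial step is asserted rather than proved.
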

\begin{proof}
Let $f \colon M' \to M$ be a homotopy equivalence such that $M'$ is not $s$-cobordant to $M$. \cref{lem3} implies that its normal invariant $N(f)$ is represented by an immersed sphere $\alpha$ with non-vanishing $w_2$. Consider a homotopy equivalence $g \colon \star \CP^2 \to \CP^2$, where $\star \CP^2$ is the Chern manifold. Then the normal invariant $N(g)\in H_2(\CP^2;\Z/2)\cong \Z/2$ is non-trivial and thus represented by $\CP^1\subseteq \CP^2$. Considering the the homotopy equivalence $M'\#\star\CP^2\xrightarrow{f\#g}M\#\CP^2$, we find that $N(f\# g)$ is represented by $\alpha + \CP^1 \in H_2(M'\# \CP^2;\Z/2)$. Using the result from \cite{cochran-habegger} again and the fact that $w_2(\alpha + \CP^1) = 0$, it follows that $M'\#\star\CP^2$ is s-cobordant to $M\#\CP^2$ and thus $\ks(M')+1=\ks(M'\#\star\CP^2)=\ks(M\#\CP^2)=\ks(M)$ as claimed.
\end{proof}

\begin{proof}[Proof of \cref{thm:main}]
	As mentioned in \cref{remark3}, the classifying map $M\to B\pi$ has degree $\pm 1$ if and only if $\pi_2(M)$ is projective, \cite[Theorem~6]{hillman}. Hence it follows directly from \cite[Theorem~11]{hillman} that under the assumptions of \cref{thm:main} there is a homotopy equivalence $f\colon N\to M$. We view $f$ as an element in $\mathcal{S}(M)$. Using \ref{cond:A1}, the normal invariant $N(f)$ of $f$ lies in the kernel of $\L\langle 1 \rangle_4^w(M)\to \L\langle 1 \rangle_4^w(B\pi)$. Using a similar argument as at the end of the proof of \cref{thm:KT} with the twisted Atiyah--Hirzebruch spectral sequence, we find that the kernel of $\L\langle 1 \rangle_4^w(M)\to \L\langle 1 \rangle_4^w(B\pi)$ is isomorphic to the kernel of $H_2(M;\Z/2)\to H_2(B\pi;\Z/2)$. The Serre spectral sequence for the fibration $\wt M \to M \to B\pi$ shows that $H_2(\wt M;\Z/2)\cong H_2(\wt M;\Z)\otimes_\Z\Z/2\cong \pi_2(\wt M)\otimes_\Z \Z/2$ surjects onto the kernel of $H_2(M;\Z/2)\to H_2(B\pi;\Z/2)$. Thus, every element in this kernel can be represented by an immersed sphere. 
	
	If $M$ and $N$ in \cref{thm:main} are almost spin, all immersed spheres in $M$ have trivial $w_2$ by assumption.  In this case, \cref{lem3} together with \cref{thm:KT} proves \cref{thm:main}. Note that no appeal to Kirby--Siebenmann invariants was necessary up to this point.
	
	If $M$ is almost spin, \cref{thm:main} follows from \cref{lem4} since we assumed that $N$ has the same Kirby--Siebenmann invariant as $M$.
\end{proof}

\begin{rem}
If $M$ is totally non-spin and $\pi=\pi_1(M)$ is good, one can give a different argument for \cref{lem4} making use of 
the existence of a star partner $\star M$ of $M$, i.e.\ a manifold homotopy equivalent to $M$ but with opposite Kirby--Siebenmann invariant. Indeed, if such a star partner exists, it follows immediately from the observation that the orbit space $\mathcal{S}(M)$ under the action of the self homotopy equivalences of $M$ consists of 2 elements, that they must be represented by $\id_M$ and some homotopy equivalence $\star M \to M$. Therefore, any $N$ homotopy equivalent to $M$ and with the same Kirby--Siebenmann invariant is in the same orbit as $\id_M$ under the action of the self homotopy equivalences of $M$.
\end{rem}

\bibliographystyle{amsalpha}
\bibliography{classification}

\end{document}